\title{Gaussian Conditional Independence Relations Have No Finite Complete Characterization}
\author{Seth Sullivant}
\address{Department of Mathematics and Society of Fellows, Harvard University, Cambridge, MA 02138}
\theoremstyle{plain}
\newtheorem{thm}{Theorem}[section]
\newtheorem{lemma}[thm]{Lemma}
\newtheorem{prop}[thm]{Proposition}
\theoremstyle{definition}
\newtheorem{defn}[thm]{Definition}
\theoremstyle{remark}
\newcommand{\zz}{\mathbb{Z}}
\newcommand{\rr}{\mathbb{R}}
\newcommand{\kk}{\mathbb{K}}
\newcommand{\bbe}{\mathbb{E}}
\newcommand{\bfa}{\mathbf{a}}
\newcommand{\bfu}{\mathbf{u}}
\newcommand{\bfv}{\mathbf{v}}
\newcommand{\bfx}{\mathbf{x}}
\newcommand{\cn}{\mathcal{N}}
\newcommand{\ind}{\mbox{$\perp \kern-5.5pt \perp$}}
\begin{document}

\begin{abstract}
We show that there can be no finite list of conditional independence relations which can be used to deduce all conditional independence implications among Gaussian random variables.  To do this, we construct, for each $n> 3$ a family of $n$ conditional independence statements on $n$ random variables which together imply that $X_1 \ind X_2$, and such that no subset have this same implication.  The proof relies on binomial primary decomposition.
\end{abstract}

\maketitle


\section{Introduction}

A fundamental question about the nature of conditional independence is whether or not there exists a finite set of conditional independence relations, from which all general conditional independence implications can be deduced.  This problem was resolved in the negative by Studen\'{y} \cite{Studeny1992}, who showed that there can be no such finite characterization.  To prove this, he exhibited infinite families of conditional independence implications, using the multi-information function and properties of the submodular cone.

Studen\'{y}'s work leaves open the question of whether or not it is possible to find such a finite axiom characterization over restricted classes of random variables.  For instance, if we assume that all random variables are binary, there are more independence relations that hold.   Perhaps it is possible that among the many new independence relations there can be a finite axiom system. 

In this note, we show that there is no finite characterization in the regular Gaussian case, by exhibiting an infinite family of conditional independence implications which cannot be deduced from any other conditional independence implications.  The main result of this note is:

\begin{thm} \label{thm:big}
Let $X \sim \mathcal{N}( \mu, \Sigma)$ with $\Sigma$ positive definite.  Suppose that $X$ satisfies the conditional independence constraints:
$$X_1 \ind X_2 | X_3,  X_2 \ind X_3 | X_4,  \ldots, X_{n-1} \ind X_n | X_1, X_n \ind X_1 | X_2$$
with $n \geq 4$ .  Then $X$ also satisfies the marginal independence constraints:
$$X_1 \ind X_2, X_2 \ind X_3, \ldots, X_{n-1} \ind X_n, X_1 \ind X_n.$$
However, if $X$ satisfies only a subset of the first CI constraints, it may not satisfy any of the second set.
\end{thm}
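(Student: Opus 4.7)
The plan is to translate the Gaussian CI hypotheses into polynomial equations in the entries of $\Sigma$ and then exploit the cyclic structure together with positive definiteness to force the covariances $\sigma_{i,i+1}$ to vanish.

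For jointly Gaussian variables, $X_i \ind X_j \mid X_k$ is equivalent to the vanishing of the $2 \times 2$ minor $\sigma_{ij}\sigma_{kk} - \sigma_{ik}\sigma_{jk}$. Set $a_i := \sigma_{i,i+1}$, $b_i := \sigma_{i,i+2}$, and $d_i := \sigma_{ii}$, with all indices taken modulo $n$. The hypothesis becomes the cyclic binomial system
$$a_i\, d_{i+2} = b_i\, a_{i+1}, \qquad i = 1, \ldots, n,$$
and the desired conclusion is $a_i = 0$ for every $i$.

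The key algebraic move is to multiply all $n$ equations together. Because the indices $i+2$ and $i+1$ each cycle through $\{1,\ldots,n\}$, the two products reindex to give
$$\Bigl(\prod_i a_i\Bigr)\Bigl(\prod_i d_i\Bigr) \;=\; \Bigl(\prod_i b_i\Bigr)\Bigl(\prod_i a_i\Bigr).$$
Suppose for contradiction that $\prod a_i \neq 0$. Cancelling yields $\prod d_i = \prod b_i$. But positive definiteness of $\Sigma$ means every $2 \times 2$ principal submatrix has strictly positive determinant, so $b_i^2 < d_i d_{i+2}$ for each $i$. Taking the product of these positive strict inequalities gives $(\prod b_i)^2 < \prod d_i d_{i+2} = (\prod d_i)^2$, contradicting $\prod b_i = \prod d_i$. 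Therefore some $a_j = 0$, and then the single equation $a_{j-1}d_{j+1} = b_{j-1}a_j$ together with $d_{j+1}>0$ forces $a_{j-1}=0$. Iterating around the cycle gives $a_i = 0$ for all $i$, which is precisely the claimed set of marginal independences.

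For the final sentence of the theorem, I would verify that no strict subset of the hypotheses suffices by constructing an explicit positive definite counterexample. By cyclic symmetry it is enough to drop $X_n \ind X_1 \mid X_2$. Take $d_i = 1$ for all $i$, set $\sigma_{ij}=0$ for every pair $(i,j)$ at cyclic distance at least $3$, fix a common small value $\beta$ for each $b_i$, and let $a_n = \epsilon$; the remaining $n-1$ equations reduce to $a_i = \beta\, a_{i+1}$ for $i<n$, so $a_i = \beta^{n-i}\epsilon$. For $\beta,\epsilon$ sufficiently small the resulting $\Sigma$ is strictly diagonally dominant, hence positive definite, while $a_n = \epsilon \neq 0$ shows that $X_1$ and $X_n$ are not marginally independent. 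The main obstacle in the proof is really the first observation: recognizing that the cyclic product identity is exactly the right algebraic move to engage positive definiteness via the strict Cauchy--Schwarz bound on off-diagonal entries.
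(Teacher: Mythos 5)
Your proof is correct, and it reaches the conclusion by a genuinely more elementary route than the paper. The paper's argument runs through binomial primary decomposition: it observes that the CI ideal $I_n$ is a lattice basis ideal, invokes the Ho\c{s}ten--Shapiro characterization of minimal primes to show the only components are the toric ideal $I_{A_n}$ and the monomial ideal $\langle \sigma_{12},\ldots,\sigma_{1n}\rangle$, and then excludes the toric component from the positive definite cone via a Hadamard-product construction that produces the inequality $\prod_i \sigma_{ii}^2 > \prod_i \sigma_{i,i+2}^2$. You replace all of that machinery with a direct dichotomy: multiplying the $n$ binomial relations and reindexing gives $(\prod a_i)(\prod d_i) = (\prod b_i)(\prod a_i)$, and the case $\prod a_i \neq 0$ is killed by multiplying the strict $2\times 2$ principal-minor inequalities $b_i^2 < d_i d_{i+2}$ --- which is exactly the paper's Hadamard inequality, obtained without Hadamard products --- while the case ``some $a_j = 0$'' propagates around the cycle using $d_{j+1}>0$ to force all $a_i = 0$. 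Your two cases are precisely the two minimal primes of the paper (cancelling $\prod a_i$ corresponds to saturating to the toric component; the propagated zeros are the monomial component), so the arguments are structurally parallel, but yours is self-contained and needs no commutative algebra, whereas the paper's framework is the general-purpose method that scales to other Gaussian CI implication problems and explains a priori why exactly these two cases exhaust $V(I_n)$. Your counterexample for the sharpness claim (geometric decay $a_i = \beta^{n-i}\epsilon$ with a diagonally dominant completion, plus the cyclic-symmetry reduction to dropping a single constraint) is essentially identical to the paper's Lemma~\ref{lem:noless} and is complete as stated.
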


The proof relies on three simple ideas.  First, we use the fact that conditional independence in the Gaussian case is an algebraic restriction on the covariance matrix $\Sigma$.  So our collection of conditional independence constraints gives an ideal $I_n$ in $\rr[\Sigma]$.  Next, we compute the primary decomposition of $I_n$.  Since, in our case, $I_n$ will be a binomial ideal, we can exploit the results of \cite{Hosten2000} to determine the minimal primes of $I_n$.  Then we use Hadamard products (a trick we learned from \cite{Matus2002}) to show that one of these components does not intersect the cone of positive definite matrices and that the other component yields the desired conditional independence implications.


\section{The Proof}

Recall that a regular multivariate Gaussian $X \sim \cn(\mu, \Sigma)$ is complete specified by its mean vector $\mu \in \rr^n$ and its symmetric positive definite covariance matrix $\Sigma \in PD_n$.
First of all, we show that conditional independence corresponds to an algebraic constraint on the covariance matrix $\Sigma$.

\begin{prop}
Let $A, B, C$ be disjoint subsets of $[n]$.  Then the joint distribution for $X$ satisfies the conditional independence constraint $A \ind B | C$ if and only if the submatrix
$\Sigma_{A \cup C, B \cup C}$ has rank less than or equal to $\#C$.
\end{prop}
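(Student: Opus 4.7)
The plan is to reduce the conditional independence statement to the vanishing of the Schur complement of $\Sigma_{C,C}$ in the block submatrix $\Sigma_{A \cup C, B \cup C}$, and then translate that vanishing into the claimed rank bound via the rank-additivity identity for Schur complements.

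First, I would invoke the standard Gaussian conditional distribution formula. For a regular multivariate Gaussian $X \sim \cn(\mu, \Sigma)$, the conditional distribution of $X_{A \cup B}$ given $X_C$ is again Gaussian with covariance equal to the Schur complement $\Sigma_{A \cup B, A \cup B} - \Sigma_{A \cup B, C}\,\Sigma_{C,C}^{-1}\,\Sigma_{C, A \cup B}$, and for jointly Gaussian vectors independence is equivalent to vanishing of the off-diagonal covariance block. Consequently $A \ind B | C$ holds if and only if
\[
\Sigma_{A,B} - \Sigma_{A,C}\,\Sigma_{C,C}^{-1}\,\Sigma_{C,B} \; = \; 0.
\]
Positive definiteness of $\Sigma$ guarantees that the principal submatrix $\Sigma_{C,C}$ is itself positive definite, hence invertible, so the Schur complement is well defined.

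Second, I would, after a harmless reordering of rows and columns, write the submatrix of interest in block form
\[
\Sigma_{A \cup C, B \cup C} \; = \; \begin{pmatrix} \Sigma_{A,B} & \Sigma_{A,C} \\ \Sigma_{C,B} & \Sigma_{C,C} \end{pmatrix},
\]
and perform block row and column operations using the invertible block $\Sigma_{C,C}$ to clear the off-diagonal blocks, leaving the Schur complement in the $(1,1)$ position. This yields the rank identity
\[
\mathrm{rank}\, \Sigma_{A \cup C, B \cup C} \; = \; |C| \; + \; \mathrm{rank}\bigl( \Sigma_{A,B} - \Sigma_{A,C}\,\Sigma_{C,C}^{-1}\,\Sigma_{C,B} \bigr).
\]

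Combining the two ingredients then finishes the proof. The $\Sigma_{C,C}$ block alone contributes $|C|$ linearly independent rows, so $\mathrm{rank}\, \Sigma_{A \cup C, B \cup C} \geq |C|$ holds automatically; hence the inequality in the statement is equivalent to equality, which by the rank identity is equivalent to vanishing of the Schur complement, which by the first step is equivalent to $A \ind B | C$. I do not anticipate any real obstacle here; the argument depends only on two routine linear-algebra facts (the Gaussian conditional covariance formula and rank-additivity across a Schur complement), and positive definiteness of $\Sigma$ enters only to secure invertibility of $\Sigma_{C,C}$.
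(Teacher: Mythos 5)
Your proposal is correct and follows essentially the same route as the paper: reduce $A \ind B | C$ to the vanishing of the Schur complement $\Sigma_{A,B} - \Sigma_{A,C}\Sigma_{C,C}^{-1}\Sigma_{C,B}$ via the Gaussian conditional covariance formula, then use invertibility of $\Sigma_{C,C}$ to equate that vanishing with $\mathrm{rank}\,\Sigma_{A\cup C, B\cup C} \leq \#C$. The only difference is that you make the rank-additivity identity for the Schur complement explicit, where the paper states the final equivalence without elaboration.
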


\begin{proof}
If $X  \sim  \mathcal{N}(\mu, \Sigma)$ follows a multivariate Gaussian distribution, then the conditional distribution of $X_{A \cup B}  |  X_C = x_c$ is also Gaussian with distribution 
$$ \mathcal{N}\left( \mu_{A\cup B} +  \Sigma_{A\cup B,C}  \Sigma_{C,C}^{-1}( x_C - \mu_C),   \Sigma_{A \cup B,A \cup B} - \Sigma_{A\cup B,C} \Sigma_{C,C}^{-1}  \Sigma_{C,A \cup B} \right),$$
(see, for example, \cite[\S B.6]{Bickel2001}).  The conditional independence statement $A \ind B | C$ holds if and only if  $(\Sigma_{A \cup B,A \cup B} - \Sigma_{A\cup B,C} \Sigma_{C,C}^{-1}  \Sigma_{C,A \cup B})_{A,B}  =  0$.  The $A,B$ submatrix of $\Sigma_{A \cup B,A \cup B} - \Sigma_{A\cup B,C} \Sigma_{C,C}^{-1}  \Sigma_{C,A \cup B}$ is easily seen to be $\Sigma_{A,B}  -  \Sigma_{A,C}  \Sigma_{C,C}^{-1}  \Sigma_{C,B}$ which is the Schur complement of the matrix
$$
\Sigma_{A \cup C,  B \cup C}  =  
\begin{pmatrix}
\Sigma_{A,B}  &   \Sigma_{A,C}  \\
\Sigma_{C,B}  &   \Sigma_{C,C}
\end{pmatrix}.
$$
Since $\Sigma_{C,C}$ is always invertible (it is positive definite), the Schur complement is zero if and only if the matrix $\Sigma_{A \cup C,  B \cup C}$ has rank less than or equal to $\#C$.
\end{proof}

Thus, if $\mathcal{M}  =  \{A_1 \ind B_1 | C_1, \ldots,  A_m \ind B_m | C_m \}$ is a conditional independence model, we naturally get the \emph{conditional independence ideal}
$$CI_\mathcal{M}   =   \left<   \#C+1 \mbox{ minors of }  \Sigma_{A \cup C, B \cup C}   \, \, | \,\, A \ind B | C \in \mathcal{M}  \right>  \subseteq  \rr[\Sigma].$$
To determine which conditional independence statements a collection of independence statements imply, we investigate the primary decomposition of the ideal $CI_\mathcal{M}$, and figure out which components of $CI_\mathcal{M}$ intersect the positive definite cone $PD_n$.

Define $\mathcal{M}_n$ to be the cyclic system of conditional independence constraints
$$\mathcal{M}_n = \{1 \ind 2 | 3, 2 \ind 3 | 4, \ldots,  n-1 \ind n | 1,   n \ind 1 | 2 \}$$
and let $I_n  =  CI_{\mathcal{M}_n}$ be the ideal  defining this cyclic model.  Or goal is to compute the primary decomposition of this ideal.


\subsection{Minimal primes of lattice basis ideals}

Let $B$ be a finite collection of integral vectors that form a basis for a saturated lattice $\mathcal{L}  \subseteq \zz^n$.  The \emph{lattice basis ideal} associated to $B$ is the binomial ideal
$$I_B = \left<  \bfx^{\bfu^+}  -  \bfx^{\bfu^-} \, \,  | \, \,  \bfu \in B  \right> \subset \kk[\bfx],$$
where $\bfu = \bfu^+ -  \bfu^-$ is the cancellation free representation of $\bfu$ as the difference of two nonnegative integral vectors, and $\bfx^\bfa = x_1^{a_1}x_2^{a_2} \cdots x_n^{a_n}$ is the monomial vector notation.   A thorough study of binomial ideals was done in \cite{Eisenbud1996} and a simple combinatorial characterization of the minimal primes of lattice basis ideals was discovered in \cite{Hosten2000}.  Their combinatorial characterization will be of considerable use, because of the following:

\begin{prop}
The conditional independence ideal 
$$I_n  = \left<  \sigma_{33} \sigma_{12} - \sigma_{13} \sigma_{23},  
\sigma_{44} \sigma_{23} - \sigma_{24} \sigma_{34},  \ldots, 
\sigma_{11} \sigma_{n-1,n} - \sigma_{1n-1} \sigma_{n1},
\sigma_{22} \sigma_{1n} - \sigma_{2n} \sigma_{12}  \right>$$
is a lattice basis ideal.
\end{prop}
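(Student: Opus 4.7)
The plan is to exhibit the $n$ binomial generators explicitly as differences of exponent vectors $\bfu_1, \ldots, \bfu_n \in \zz^N$ (where $N = \binom{n+1}{2}$ is the number of variables $\sigma_{ij}$, $i \leq j$), and then verify the two conditions: that each $\bfu_i$ is in cancellation-free form, and that $\{\bfu_1, \ldots, \bfu_n\}$ is a basis for a saturated sublattice $\LL$ of $\zz^N$.

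For $i = 1, \ldots, n$ (indices taken cyclically mod $n$), the binomial coming from the conditional independence statement $i \ind i{+}1 \mid i{+}2$ corresponds to the vector $\bfu_i$ with
$$\bfu_i = \bfe_{\sigma_{i+2,i+2}} + \bfe_{\sigma_{i,i+1}} - \bfe_{\sigma_{i,i+2}} - \bfe_{\sigma_{i+1,i+2}}.$$
The four coordinates appearing here are pairwise distinct for $n \geq 4$ (in particular, the unique diagonal entry $\sigma_{i+2,i+2}$ is not equal to any of the off-diagonal entries $\sigma_{i,i+1}, \sigma_{i,i+2}, \sigma_{i+1,i+2}$), so $\bfu_i^+$ and $\bfu_i^-$ have disjoint support and the given generator is in the required cancellation-free form.

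To see that $\bfu_1, \ldots, \bfu_n$ are linearly independent and generate a saturated lattice of rank $n$, I would single out the $n$ coordinates indexed by the diagonal variables $\sigma_{11}, \sigma_{22}, \ldots, \sigma_{nn}$. By inspection, the diagonal variable $\sigma_{kk}$ appears in exactly one generator, namely $\bfu_{k-2}$, and with coefficient $+1$. Therefore, the $n \times n$ submatrix of $[\bfu_1 | \cdots | \bfu_n]$ obtained by selecting the $n$ diagonal rows is a permutation matrix, and in particular has determinant $\pm 1$. This immediately shows linear independence of the $\bfu_i$; moreover, since one of the maximal $n \times n$ minors of the generator matrix equals $\pm 1$, the gcd of all maximal minors is $1$, which is the standard criterion for $\LL$ to be a saturated (equivalently, direct-summand) sublattice of $\zz^N$.

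The main combinatorial obstacle, if any, is simply confirming that each diagonal variable $\sigma_{kk}$ appears in precisely one generator, so that the chosen $n \times n$ submatrix is truly a permutation matrix; this follows from the fact that the conditioning set $\{i+2\}$ varies cyclically and uniquely across the $n$ generators of $\cm_n$. Once this bookkeeping is done, we may conclude $I_n = I_B$ where $B = \{\bfu_1, \ldots, \bfu_n\}$ is a basis for the saturated lattice $\LL = \zz B$, so $I_n$ is a lattice basis ideal as claimed.
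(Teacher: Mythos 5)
Your proof is correct and follows essentially the same route as the paper: both arguments identify the exponent vectors $\bfu_i$ and observe that each diagonal variable $\sigma_{kk}$ occurs in exactly one generator, so the submatrix of the generator matrix on the diagonal coordinates is a permutation (in the paper's indexing, identity) matrix, giving linear independence and saturation. Your additional remarks --- checking that each binomial is cancellation-free and invoking the gcd-of-maximal-minors criterion for saturation --- merely make explicit details the paper leaves implicit.
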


\begin{proof}
Let $\bfu_i$ denote the exponent vector attached the conditional independence statement $X_{i-2} \ind X_{i-1} | X_i$.  That is, $\bfu_i =  e_{ii} + e_{i-2,i-1} - e_{i-1,i} - e_{i-2,i}$, where the indices are interpreted modulo $n$.  These vectors are linearly independent because the vector $e_{ii}$ only appears in vector $\bfu_i$.  Furthermore, this shows that the vectors span a saturated lattice because the submatrix of the matrix $M_n = [\bfu_1; \ldots;  \bfu_n]$ with columns corresponding to the diagonal entries in $\Sigma$, is the identity matrix.
\end{proof}

Since $I_n$ is a lattice basis ideal, we can use the results from \cite{Hosten2000} to study the primary decomposition of $I_n$.   Let $B = \{\bfu_1, \ldots, \bfu_k\}$ be a basis for a saturated lattice.  We form the matrix $M \in \zz^{k \times n}$ whose rows are the vectors $\bfu_i$.  
From \cite{Eisenbud1996}, it is known that the minimal primes of a lattice basis ideal are completely determined by the indeterminates that appear in them.  In particular, let $S \subseteq [m]$ be a collection indexing indeterminates.  Then the prime ideal associated to this subset is the ideal:

$$I_S =  \left<  x_i  \, \, | \, \, i \in S  \right>   +   \left<  \bfx^{\bfu^+} -  \bfx^{\bfu^-}  \, \, | \,\, \bfu \in B, \, \, {\rm supp}(\bfu)  \cap S = \emptyset \right>  :  \prod_{i \notin S}  x_i^\infty.$$
Thus, one must determine the sets $S$ that give minimal primes.  When $S = \emptyset$, $I_S = I_B : \prod  x_i^\infty$ is the toric ideal associated to the lattice $\mathcal{L}$.  This toric ideal is always a minimal prime of $I_B$.  The main result of \cite{Hosten2000} is that the other minimal primes of the lattice basis ideal $I_B$ can be read off from the sign patterns in the matrix $M$.

A matrix $M$ is called \emph{mixed} if every row contains a positive entry and a negative entry.  Let $S$ be a subset of $[m]$ indexing a possible minimal prime.  After permuting rows and columns of $M$, and relabeling $S$, we can assume that $S = \{1,2, \ldots, t\}$ and $M$ has the form
$$
M  = \left( \begin{array}{c|c}
N & B   \\ 
\hline 0 & D  \end{array}  \right),
$$
where $N$ has no all zero rows.

\begin{defn}
A matrix $N$ is called \emph{irreducible} if 
\begin{enumerate}
\item  $N$ is a mixed $s \times t$ matrix with $t \leq s$  and
\item one  cannot bring $N$ into the form 
$$N =  \left( \begin{array}{c|c} 
N' & B'  \\
\hline 0 & D'  \end{array}  \right)$$
by permuting rows and columns where $N'$ is a mixed $s' \times t'$ matrix with $t' \leq s'$ and $D'$ is a $(s - s') \times (t - t')$ matrix with $t - t' > s - s'$.
\end{enumerate}
\end{defn}

\begin{thm}\cite{Hosten2000}  \label{thm:irred}
A set of $S \subseteq [m]$ yields a minimal prime $I_S$ of the lattice basis ideal $I_B$ if and only if the associated matrix $N_S$ is an irreducible matrix.
\end{thm}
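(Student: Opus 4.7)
The plan splits into three stages: identifying candidate minimal primes, deriving necessary conditions on $N_S$, and matching the combinatorial irreducibility condition to genuine minimality. For the first stage, I invoke the Eisenbud--Sturmfels classification of associated primes of binomial ideals \cite{Eisenbud1996}, which ensures that every minimal prime of $I_B$ has the displayed form $I_S$ for some $S \subseteq [n]$. The task is thus reduced to characterizing which subsets $S$ actually arise.

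For the second stage, I extract two necessary conditions. In order for $I_S$ to contain $I_B$, each binomial generator $\mathbf{x}^{\mathbf{u}_i^+} - \mathbf{x}^{\mathbf{u}_i^-}$ with $\mathrm{supp}(\mathbf{u}_i) \cap S \neq \emptyset$ must lie in $I_S$. Since $\mathbf{u}_i^+$ and $\mathbf{u}_i^-$ have disjoint supports, this forces both to meet $S$, so the row of $N_S$ indexed by $\mathbf{u}_i$ contains both a positive and a negative entry --- i.e.\ $N_S$ must be mixed. For the codimension count, the saturated sublattice $\mathcal{L} \cap \zz^{[n] \setminus S}$ is the kernel of the projection $\mathcal{L} \to \zz^S$, whose image is the $\zz$-span of the rows of $N_S$ and hence has rank $\mathrm{rank}(N_S)$. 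The kernel thus has rank $k - \mathrm{rank}(N_S)$, so $\mathrm{ht}(I_S) = |S| + k - \mathrm{rank}(N_S)$; equality with $\mathrm{ht}(I_B) = k$ forces $\mathrm{rank}(N_S) = |S|$, and in particular $s \geq t$.

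For the third stage, assume $N_S$ is mixed with full column rank but admits a block decomposition $\left(\begin{smallmatrix} N' & B' \\ 0 & D' \end{smallmatrix}\right)$ with $N'$ mixed of shape $s' \times t'$ ($t' \leq s'$) and $D'$ of shape $(s-s') \times (t-t')$ satisfying $t - t' > s - s'$. Reading the first $t'$ columns as a subset $S' \subsetneq S$ yields $N_{S'} = N'$, again mixed, so $I_{S'}$ also contains $I_B$; the rank deficit forced by the shape of $D'$ is precisely the combinatorial condition that makes $I_{S'} \subsetneq I_S$ rather than merely incomparable, contradicting minimality of $I_S$. Conversely, if $N_S$ is irreducible then no such sub-prime exists, and $I_S$ must be minimal. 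The main obstacle is this converse: one must show that any strictly smaller prime above $I_B$ is of the form $I_{S'}$ (by the structure theorem) and that the containment $I_{S'} \subsetneq I_S$ can only arise from a block decomposition of $N_S$ of precisely the forbidden shape. Ruling out ``incomparable'' primes $I_{S'}$ that would contradict minimality by other means, and carefully translating the inequality $t - t' > s - s'$ into the algebraic relation between $I_{S'}$ and $I_S$, is the technical heart of the proof.
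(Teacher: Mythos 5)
First, a point of reference: the paper offers no proof of this statement at all --- it is quoted from Ho\c{s}ten--Shapiro \cite{Hosten2000} --- so your proposal has to stand on its own. Your stage 1 (Eisenbud--Sturmfels reduces the problem to deciding which sets $S$ occur) and the mixedness argument in stage 2 are sound. But the codimension count in stage 2 contains a genuine error. The toric part of $I_S$ is the lattice ideal of (the saturation of) the span of only those $k-s$ basis vectors whose support misses $S$; it is \emph{not} the lattice ideal of $\mathcal{L}\cap\zz^{[n]\setminus S}$, which can have strictly larger rank. Hence $\mathrm{ht}(I_S)=|S|+(k-s)$, not $|S|+k-\mathrm{rank}(N_S)$. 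Worse, the conclusion you draw --- that minimality forces $\mathrm{rank}(N_S)=|S|$ --- is false, because lattice basis ideals need not be complete intersections and their minimal primes need not all have height $k$. Concretely, for the basis $\{(1,-1,1,0,0),(1,-1,0,1,0),(1,-1,0,0,1)\}$ one gets $I_B=\langle x_1x_3-x_2,\ x_1x_4-x_2,\ x_1x_5-x_2\rangle$, whose minimal primes are the toric ideal (height $3$) and $\langle x_1,x_2\rangle$ (height $2$); the latter corresponds to $S=\{1,2\}$ with $N_S$ the $3\times 2$ matrix whose every row is $(+1,-1)$, which is irreducible in the sense of the definition but has rank $1<|S|=2$. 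The only thing Krull's height theorem actually yields is the inequality $t\le s$ (from $t+k-s\le k$), i.e.\ condition (1) of irreducibility; insisting on equality of heights would discard legitimate minimal primes.

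Stage 3 is the real content of the theorem, and you explicitly defer it as ``the technical heart,'' so the equivalence is not proved in either direction. Moreover, the route you sketch is off target: for $S'\subsetneq S$ the primes $I_{S'}$ and $I_S$ are typically \emph{incomparable}, because shrinking $S$ shrinks the monomial generators but enlarges the toric part (more basis vectors have support disjoint from $S'$), so one cannot exhibit $I_{S'}$ as a prime strictly below $I_S$ merely by pointing at the block $N'$. What must be shown is that reducibility of $N_S$ produces \emph{some} prime over $I_B$ properly contained in $I_S$, and conversely that irreducibility excludes all such primes; this is where Ho\c{s}ten and Shapiro spend their effort, via an inductive analysis of the block structure and of the relevant saturations. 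As it stands, your proposal correctly isolates mixedness as a necessary condition but establishes neither implication of the stated equivalence, and its quantitative necessary condition is contradicted by explicit examples.
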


Now we will apply Theorem \ref{thm:irred} to determine the minimal primes of the ideals $I_n$.

\begin{lemma} \label{lem:minprime}
For $n \geq 4$, the minimal primes of $I_n$ are the toric ideal $I_{A_n} =  I_n :  \prod \sigma_{ij}^{\infty}$ and the monomial ideal $\left< \sigma_{12}, \sigma_{23}, \ldots, \sigma_{1n} \right>$.
\end{lemma}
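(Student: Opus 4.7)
The plan is to apply Theorem~\ref{thm:irred}, which identifies the minimal primes of $I_n$ with subsets $S$ of the variables such that the associated submatrix $N_S$ of $M_n$ is irreducible. The empty subset $S=\emptyset$ always qualifies and yields the toric ideal $I_{A_n}$, so the real task is to show that the only other qualifying subset is $S=\{\sigma_{12},\sigma_{23},\ldots,\sigma_{n-1,n},\sigma_{n,1}\}$.

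The first step is a double count of the nonzero entries of $N_S$. In $M_n$, each column indexed by a diagonal variable $\sigma_{ii}$ has exactly one nonzero entry (in row $\bfu_i$), each column indexed by a nearest-neighbor variable $\sigma_{i-1,i}$ has exactly two, and each column indexed by a distance-$2$ variable $\sigma_{i-2,i}$ has one nonzero entry for $n\geq 5$ and two for $n=4$; all other variables contribute a zero column to $N_S$. Since irreducibility requires $N_S$ to be mixed with at most as many columns as rows, every active row contributes at least two entries in the columns indexed by $S$. Comparing the row total to the column total yields a linear inequality that forces $S$ to contain no diagonals, no variables outside the union of generator supports, and (for $n\geq 5$) no distance-$2$ variables either.

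The second step is a cyclic cascade. Restricted to nearest-neighbor variables, if $\sigma_{k-1,k}\in S$ then row $\bfu_k$ has a negative entry $-\sigma_{k-1,k}$ in $S$, and mixedness forces a positive entry; with diagonals and distance-$2$ variables already excluded, this must be $\sigma_{k-2,k-1}$, so $\sigma_{k-2,k-1}\in S$. Iterating around the cycle produces $S=\emptyset$ or $S=\{\sigma_{i-1,i}:i\in[n]\}$. In the residual $n=4$ case, assuming $\sigma_{13}$ or $\sigma_{24}\in S$ triggers the analogous cascade on two rows simultaneously, eventually filling in all four nearest-neighbor variables and pushing $|S|$ strictly above $n$, contradicting the size bound.

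The final step, which I expect to be the main conceptual obstacle although the argument is short, is to verify that $S=\{\sigma_{i-1,i}:i\in[n]\}$ really does yield an irreducible $N_S$. Here $N_S$ is the $n\times n$ signed incidence matrix of the directed $n$-cycle. In any candidate reduction of $N_S$, the rows placed into the upper-left mixed block correspond to edges both of whose endpoints lie in a chosen subset $T$ of $t'$ cycle vertices; for any proper $T\subsetneq[n]$ such a subgraph of the $n$-cycle has at most $|T|-1$ edges, giving $s'\leq t'-1<t'$ and contradicting the requirement $t'\leq s'$. Once irreducibility is established, the ideal formula for $I_S$ collapses: the binomial generators are indexed by $\bfu_i$ with support disjoint from $S$, but every $\bfu_i$ has support meeting $S$, leaving $I_S=\langle\sigma_{12},\sigma_{23},\ldots,\sigma_{n-1,n},\sigma_{n,1}\rangle$.
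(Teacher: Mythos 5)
Your proposal is correct and follows essentially the same route as the paper: apply the Ho\c{s}ten--Shapiro criterion (Theorem~\ref{thm:irred}), use the count of nonzero column entries against mixedness and $t\leq s$ to confine $S$ to the nearest-neighbor (circulant) variables, and check that the full circulant block is irreducible and yields the monomial ideal. The only differences are elaborations the paper leaves implicit: you carry out the $n=4$ case by the cascade argument (the paper defers it to Macaulay2 or a ``slight variation''), and you verify irreducibility of the circulant block via the cycle-incidence count rather than the paper's bare assertion that no proper nonempty submatrix is mixed.
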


\begin{proof}
For $n = 4$, this can be checked directly in Macaulay2 \cite{M2} or proven directly by a slight variation on the argument below.  So assume henceforth that $n \geq 5$.

The toric ideal $I_{A_n}$ is necessarily a minimal prime of $I_n$.  We need to show that the only set of variables $S$ that induces an irreducible decomposition of the lattice basis matrix is the one 
corresponding to the set of variables $\{\sigma_{12}, \sigma_{23}, \ldots, \sigma_{1n} \}$.  First, we want to describe the lattice basis matrix $M_n$, which we need to find irreducible decompositions of.  Since the binomials $\sigma_{ii} \sigma_{i-1,i-2} - \sigma_{i-1,i} \sigma_{i-2,i}$ only have index pairs $(i,j)$  with $|i-j| \leq 2 \mod n$, we only need to use $3n$ columns.   We separate these columns into three groups, depending on whether $|i - j| = 0,$ $1,$ or $2$.   For $n = 5$, $M_5$ is a $5 \times 15$ matrix:

$$M_5 =  \left( \begin{array}{ccccc|ccccc|ccccc}
+1 & 0 & 0 & 0 & 0 &+1 & -1 & 0 & 0 & 0 & -1 & 0 & 0 & 0 & 0 \\
0 & +1 & 0 & 0 & 0 & 0 & +1 & -1 & 0 & 0 & 0 & -1 & 0 & 0 & 0 \\
0 & 0 & +1 & 0 & 0 & 0 & 0 & +1 & -1 & 0 & 0 & 0 & -1 & 0 & 0  \\
0 & 0 & 0 & +1 & 0 & 0 & 0 & 0 & +1 & -1 & 0 & 0 & 0 & -1 & 0 \\
0 & 0 & 0 & 0 & +1 & -1 & 0 & 0 & 0 & +1 & 0  & 0 & 0 & 0 & -1  \\
\end{array}  \right)$$

In general, after reordering the columns in each block, we will have an identity matrix, a circulant matrix on the vector $(+1, -1, 0, \ldots, 0)$, and minus the identity matrix for the three blocks of size $n$.  Note that the central circulant block, corresponding to the indeterminates $\sigma_{12}, \sigma_{23}, \ldots, \sigma_{1n}$, is an irreducible submatrix as it is $n \times n$, mixed, and no nonempty submatrix of it is mixed.  This is the desired minimal prime we were seeking.  It remains to show that no other subsets of the variables induce an irreducible decomposition of the matrix $M$.  

To see why there are no other irreducible decompositions, we can just count the number of nonzero entries in each column.  Note that there are $\leq 2$ nonzero entries in each column.  So suppose that $N$ were an irreducible matrix arising from choosing some subsets $S$ of the variables.  Each row of $N$ must have at least one $+1$ and one $-1$ entry, since $N$ is mixed.  So if $N$ is an $s \times t$ matrix, the number of nonzero entries is $ \geq 2s$.  On the other hand, since each column has at most $2$ nonzero entries, we know that there are $\leq 2t$ nonzero entries in $N$.  Furthermore, since $N$ is irreducible  we have $s \geq t$, so such an $N$ can only exist when $s = t$ and every column used has two nonzero entries.  This implies that only the indeterminates corresponding to the circulant submatrix of $M$ can be among the variables associated to the minimal prime.  However, we have already seen that this set of variables yields an irreducible submatrix, thus there can be no other minimal primes besides the two we have already found.
\end{proof}


\subsection{Hadamard Products}

The next step in the proof depends on analyzing the toric ideal $I_{A_n}$, and showing that the variety $V(I_{A_n})$ does not intersect the positive definite cone.  Ultimately, the basic idea comes from a fact about Hadamard products of positive definite matrices.

\begin{lemma}
Let $\Sigma$ and $T$ be $n \times n$ positive definite matrices.  Then the Hadamard product $\Sigma \ast T$ defined by
$$(\Sigma \ast T)_{ij}  =  \sigma_{ij} \tau_{ij}$$
is also positive definite.
\end{lemma}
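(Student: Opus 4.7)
The plan is to use the standard tensor-product factorization of the Hadamard product. Since $\Sigma$ and $T$ are positive definite, the Cholesky decomposition (or the spectral theorem) provides invertible $n \times n$ matrices $A$ and $B$ with $\Sigma = AA^T$ and $T = BB^T$. Writing $a_i$ and $b_i$ for the $i$-th rows of $A$ and $B$, we have $\sigma_{ij} = a_i \cdot a_j$ and $\tau_{ij} = b_i \cdot b_j$, so
\[
(\Sigma \ast T)_{ij} \;=\; (a_i \cdot a_j)(b_i \cdot b_j) \;=\; (a_i \otimes b_i) \cdot (a_j \otimes b_j),
\]
using the standard inner product on $\rr^{n^2}$.

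Setting $c_i = a_i \otimes b_i$ and letting $C$ be the $n \times n^2$ matrix whose rows are $c_1, \ldots, c_n$, the identity above says $\Sigma \ast T = CC^T$, which is automatically positive semidefinite. To upgrade to strict positive definiteness it suffices to show that $C$ has full row rank $n$.

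Suppose $\sum_i x_i c_i = 0$. Reinterpreting each tensor $a_i \otimes b_i$ as the rank-one matrix $a_i^T b_i$ (column times row), the dependence becomes the matrix equation $\sum_i x_i\, a_i^T b_i = A^T \mathrm{diag}(x) B = 0$, and invertibility of $A$ and $B$ forces $\mathrm{diag}(x) = 0$, hence $x = 0$. So $C$ has rank $n$ and $\Sigma \ast T = CC^T$ is positive definite. The only step requiring real thought is this final rank calculation: the tensors $a_i \otimes b_i$ need not be linearly independent merely because the $a_i$ and $b_i$ separately are, but the \emph{diagonal} indexing (the same $i$ on both factors) together with invertibility of $A$ and $B$ packages the dependence as $A^T D B = 0$ with $D$ diagonal, which makes the conclusion immediate.
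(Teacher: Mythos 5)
Your proof is correct, but it takes a genuinely different route from the paper. The paper argues probabilistically: it realizes $\Sigma$ and $T$ as covariance matrices of independent mean-zero random vectors $X$ and $Y$ with full-dimensional support, computes that the coordinatewise product $X \ast Y$ has covariance exactly $\Sigma \ast T$, and concludes positive definiteness from the fact that $X \ast Y$ again has full-dimensional support. You instead give the classical linear-algebraic proof: factor $\Sigma = AA^T$ and $T = BB^T$ with $A,B$ invertible, observe that $(\Sigma \ast T)_{ij} = (a_i \otimes b_i)\cdot(a_j \otimes b_j)$ so that $\Sigma \ast T = CC^T$ is automatically positive semidefinite, and then prove strictness by packaging a linear dependence $\sum_i x_i\, a_i^T b_i = 0$ as $A^T \mathrm{diag}(x) B = 0$, which kills $x$. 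Your identification of the rank computation as the one step needing care is exactly right, and your resolution of it is clean. Comparing the two: the paper's argument is shorter and thematically tied to the statistical setting of the article, but it quietly leans on the assertion that the coordinatewise product of two independent fully supported random vectors again has a nondegenerate covariance, which is precisely the strict-definiteness issue you address head-on. Your argument is fully self-contained and arguably more rigorous on that point, at the cost of introducing the tensor-product bookkeeping.
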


\begin{proof}
Let $X = (X_1, \ldots, X_n)$ and $Y = (Y_1, \ldots, Y_n)$ be random vectors both with mean $0$ and with covariance matrices $\Sigma$ and $T$ respectively, and suppose that $X \ind Y$.  Also, suppose that $X$ and $Y$ have $n$-dimensional support.  Let $X \ast Y$ be the new random vector $X\ast Y = (X_1 Y_1, X_2 Y_2, \ldots, X_n Y_n)$.  Since $X$ and $Y$ both have mean zero, we compute the  covariance $\Gamma$ of $X \ast Y$ as
$$\gamma_{ij} =   \bbe[ X_i Y_i  X_j Y_j]  =  \bbe [X_i X_j]  \bbe[Y_i Y_j]  =  \sigma_{ij} \tau_{ij}.$$
Thus, $\Gamma =  \Sigma \ast T$.  Since $X$ and $Y$ have full dimensional support, so does $X \ast Y$ and thus $\Gamma$, the covariance matrix of a random variable with full dimensional support, must be a positive definite matrix. 
\end{proof}

\begin{lemma}\label{lem:badcomp}
The variety $V(I_{A_n})$ does not intersect the positive definite cone.
\end{lemma}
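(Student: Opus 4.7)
The plan is to isolate a single binomial in $I_{A_n}$ that no positive definite matrix can satisfy, and to derive the contradiction from the Hadamard product lemma just established. Specifically, I want to force the equality $\prod_j \sigma_{jj} = \prod_j \sigma_{j,j+2}$ (indices mod $n$) on $V(I_{A_n})$ and then show it conflicts with a strict Cauchy--Schwarz-type bound that holds on $PD_n$.

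First I would sum all the lattice basis vectors $\bfu_1, \ldots, \bfu_n$. Since $\bfu_i = e_{ii} + e_{i-2,i-1} - e_{i-1,i} - e_{i-2,i}$ with indices mod $n$, as $i$ ranges from $1$ to $n$ the contributions $\sum_i e_{i-2,i-1}$ and $\sum_i e_{i-1,i}$ each equal $\sum_j e_{j,j+1}$ and therefore cancel. What survives is $\sum_j e_{jj} - \sum_j e_{j,j+2}$, so the binomial
\[
\prod_{j=1}^n \sigma_{jj} \;-\; \prod_{j=1}^n \sigma_{j,j+2}
\]
belongs to $I_{A_n}$, being the lattice-relation binomial attached to an element of the lattice $\mathcal{L}$ generated by the $\bfu_i$.

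Now suppose, for contradiction, that $\Sigma \in V(I_{A_n}) \cap PD_n$. Then $\prod_j \sigma_{j,j+2} = \prod_j \sigma_{jj} > 0$. Applying the Hadamard product lemma to $\Sigma$ with itself, the matrix $\Sigma \ast \Sigma$ is positive definite, so its $2 \times 2$ principal submatrix on indices $\{j, j+2\}$ has strictly positive determinant:
\[
\sigma_{jj}^2 \sigma_{j+2,j+2}^2 - \sigma_{j,j+2}^4 > 0,
\]
i.e.\ $\sigma_{j,j+2}^2 < \sigma_{jj}\sigma_{j+2,j+2}$ strictly for each $j$. Multiplying these $n$ strict inequalities,
\[
\prod_{j=1}^n \sigma_{j,j+2}^2 \;<\; \prod_{j=1}^n \sigma_{jj}\sigma_{j+2,j+2} \;=\; \Big(\prod_{j=1}^n \sigma_{jj}\Big)^{\!2},
\]
so $\bigl|\prod_j \sigma_{j,j+2}\bigr| < \prod_j \sigma_{jj}$, contradicting the equality forced by $\Sigma \in V(I_{A_n})$.

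The only step with any real content is identifying the right element of $I_{A_n}$; the strict inequality needed to finish the argument is then immediate from the Hadamard product lemma. I do not anticipate any serious obstacles beyond the index bookkeeping in the computation of $\sum_i \bfu_i$.
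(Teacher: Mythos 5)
Your proof is correct, and its core is the same as the paper's: you identify the binomial $\prod_j \sigma_{jj} - \prod_j \sigma_{j,j+2}$ as the lattice-ideal element attached to $\sum_i \bfu_i$, and then contradict it with a strict inequality valid on $PD_n$. Where you differ is in how that inequality is obtained. The paper forms the $n$-fold cyclic Hadamard power $\Sigma \ast \pi(\Sigma) \ast \cdots \ast \pi^{n-1}(\Sigma)$ and reads off a single $2\times 2$ principal minor of that one matrix, whereas you multiply together the $n$ separate Cauchy--Schwarz inequalities $\sigma_{j,j+2}^2 < \sigma_{jj}\sigma_{j+2,j+2}$. These are two routes to the same inequality $\bigl(\prod_j \sigma_{jj}\bigr)^2 > \prod_j \sigma_{j,j+2}^2$. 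One small observation: your detour through $\Sigma \ast \Sigma$ is unnecessary, since $\sigma_{jj}\sigma_{j+2,j+2} - \sigma_{j,j+2}^2 > 0$ is already the positivity of a $2\times 2$ principal minor of $\Sigma$ itself; with that simplification your argument dispenses with the Hadamard product lemma entirely for this step, which makes it slightly more elementary than the paper's. (Both arguments handle the $n=4$ degeneracy, where each $\sigma_{j,j+2}$ occurs twice in the product, without any change.)
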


\begin{proof}
Given a permutation $\pi \in S_n$ and an $n \times n$ matrix $\Sigma$, let $\pi(\Sigma)$ be the matrix obtained by simultaneously permuting rows and columns by $\pi$.  Let $\pi$ denote the cycle $(12\cdots n)$ and let $\Sigma$ be a generic matrix.  Let the matrix $\Sigma_n$ be defined by the repeated Hadamard product:
$$\Sigma_n  =  \Sigma \ast \pi (\Sigma) \ast \pi^2(\Sigma) \ast  \cdots  \ast \pi^{n-2}( \Sigma)  \ast \pi^{n-1} (\Sigma).$$
If $\Sigma$ is positive definite, then so is $\Sigma_n$.  In particular, 
$$\det \left( (\Sigma_n) _{13,13} \right) > 0.$$
This inequality is equivalent to
$$\prod_{i = 1}^n  \sigma_{ii}^2  >  \prod_{i =1}^n  \sigma_{i-2,i}^2.$$
On the other hand, the lattice $\ker_\zz A_n$ is spanned by the vectors $\bfu_i =  e_{ii} + e_{i-2,i-1} - e_{i-1,i} - e_{i-2,i}$.  In particular, the vector 
$$\bfv  \quad = \quad \sum_{i =1}^n \bfu_i  \quad =  \quad  \sum_{i = 1}^n  e_{ii}   -  \sum_{i =1}^n e_{i-2,i} $$
is in the lattice $\ker_\zz A_n$.  This implies that the binomial 
$$  \prod_{i =1}^n  \sigma_{ii}  -  \prod_{i =1}^n  \sigma_{i-2,i}$$
belongs to the toric ideal $I_{A_n}$.  This implies that any  $\Sigma \in V(I_{A_n})$ satisfies 
$$\prod_{i = 1}^n  \sigma_{ii}^2  =  \prod_{i =1}^n  \sigma_{i-2,i}^2.$$
Thus, $V(I_{A_n})  \cap PD_n  = \emptyset$.
\end{proof}

As the last tool in the proof, we need to show that if $X$ satisfies some, but not all, of the conditional independence statements $X_i \ind X_{i+1} | X_{i +2}$, then none of the independence statements $X_i \ind X_{i+1}$ need to be satisfied.  This is explained in the following lemma.

\begin{lemma}\label{lem:noless}
Suppose that $X \sim \mathcal{N}(\mu, \Sigma)$ and $X$ satisfies all the cyclic conditional independence statements except for $X_{n-1} \ind X_{n} | X_1$.  Then $X$ need not satisfy any conditional independence statements of the form $X_i \ind X_{i+1}$.
\end{lemma}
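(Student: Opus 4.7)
The plan is to exhibit explicitly a positive definite $\Sigma$ satisfying every cyclic CI constraint except $X_{n-1} \ind X_n | X_1$, and for which every $\sigma_{i,i+1}$ (indices mod $n$) is nonzero, so that none of the marginal independence statements $X_i \ind X_{i+1}$ hold.

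First, I would normalize by setting $\sigma_{ii} = 1$ for each $i$ and zeroing out every off-diagonal entry whose cyclic distance exceeds two, i.e., $\sigma_{ij} = 0$ whenever $|i-j| \bmod n \notin \{0,1,2\}$. This is harmless because those ``far'' entries do not appear in any of the CI equations, each of which involves only a cyclically consecutive triple of indices. Writing $\rho_i := \sigma_{i,i+1}$ and $\tau_i := \sigma_{i,i+2}$, every remaining CI equation $\sigma_{i+2,i+2}\sigma_{i,i+1} = \sigma_{i,i+2}\sigma_{i+1,i+2}$ collapses to the one-step recurrence $\rho_i = \tau_i \rho_{i+1}$.

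Because the missing constraint is the one at index $i = n-1$, the remaining $n-1$ recurrences form an open chain rather than a closed cycle. The circular constraint $\prod_i \tau_i = 1$ that would otherwise be forced is not imposed, leaving exactly one free parameter. I would then set $\tau_i = \varepsilon$ for all $i$ and $\rho_{n-1} = \varepsilon$ with $\varepsilon \in (0,1)$ small, and propagate the recurrences to obtain $\rho_j = \varepsilon^{n-j}$ for $1 \le j \le n-1$ and $\rho_n = \varepsilon^n$. All $\rho_i$ are nonzero, so no marginal independence $X_i \ind X_{i+1}$ can hold; and a direct check shows the omitted constraint $\rho_{n-1} = \tau_{n-1}\rho_n$ becomes $\varepsilon = \varepsilon^{n+1}$, which fails for $\varepsilon \in (0,1)$.

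Finally, positive definiteness follows by diagonal dominance: every off-diagonal entry of $\Sigma$ is $O(\varepsilon)$, so for $\varepsilon$ sufficiently small the sum of absolute values of off-diagonal entries in each row is strictly less than the diagonal value $1$, forcing $\Sigma \succ 0$. There is no serious obstacle to this plan; the entire content of the argument is the observation that removing a single constraint from the cyclic system breaks the consistency relation that, in the full system, collapses every $\rho_i$ to zero (as established by Theorem~\ref{thm:big}).
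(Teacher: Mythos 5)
Your construction is essentially the paper's own: the paper also takes $\sigma_{ii}=1$, adjacent entries $\sigma_{i,i+1}$ forming a geometric sequence $a^{n-i}$, second-neighbor entries constantly equal to $a$, and concludes positive definiteness by diagonal dominance. The only (immaterial) difference is that you set the remaining ``far'' entries to $0$ while the paper sets them to a small nonzero $e$ so that the matrix has no zero entries at all; either choice suffices for the lemma as stated.
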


\begin{proof}
It suffices to exhibit a positive definite covariance matrix that satisfies all the cyclic conditional independence constraints except $\sigma_{11} \sigma_{n-1,n} - \sigma_{1,n}  \sigma_{1,n-1}$ that does not have any zero entries.  To this end, let $0 < a, e <  \frac{1}{n}$.  Define $\sigma_{ii} = 1$ for all $i$, $\sigma_{i-1,i}  = a^{n-i+1}$ for $i  \in [n]$,  $\sigma_{i-2,i} = a$ for all $i \in [n]$, and $\sigma_{ij} = e$ for all other values of $i,j$.  Note that we take the indices cyclically modulo $n$, so $\sigma_{1,n} = a^n$.

First of all, note that the $\Sigma$ defined in this way is positive definite because it is diagonally dominant.  Furthermore, for all $i$ except $i = 1$, we have
$$\sigma_{ii}  \sigma_{i-1,i-2}  -  \sigma_{i-1,i} \sigma_{i,i-2}  \quad = \quad  1 \cdot a^{n-(i-1) + 1}  -  a^{n - i + 1}  \cdot a   \quad = \quad 0$$
and thus, $X$ satisfies all the cyclic conditional independence statements except $X_{n-1} \ind X_n | X_1$.  Finally, by construction, all elements of $\Sigma$ are nonzero.
\end{proof}

\noindent {\em Proof of Theorem \ref{thm:big}}:  Lemmas \ref{lem:minprime} and \ref{lem:badcomp} show that $V(I_n) \cap PD_n  =  V( \left< \sigma_{12},  \sigma_{23} \ldots, \sigma_{1n} \right>) \cap PD_n$
which implies the desired implication of conditional independence statements (reinterpretting those polynomial constraints back into CI constraints).  Lemma \ref{lem:noless} shows that if even one of the $n$ initial CI constraints is omitted, there exists positive definite covariance matrices that satisfy the indicated constraints and do not imply any of the marginal CI constraints $X_i \ind X_{i +1}$.  This completes the proof of the theorem.  \qed


\end{document}